\journalname{JOTA}
\begin{document}

\title{Generalized Lagrangian Jacobi-Gauss-Radau collocation method for solving a nonlinear 2-D optimal control problem with
the classical diffusion equation}
\titlerunning{ Wave Diffusion }   


\author{Kourosh Parand         \and
        Sobhan Latifi\and
         Mehdi Delkhosh\and  
        Mohammad M. Moayeri 
}

\institute{K. Parand \at
              Department of Computer Sciences, Shahid Beheshti University, G.C. Tehran, Iran.\\
Department of Cognitive Modeling, Institute for Cognitive and Brain Sciences, Shahid Beheshti University, G.C. Tehran, Iran.\\
              \email{k\_parand@sbu.ac.ir}           
           \and
           S. Latifi, M. Delkhosh, and M.M. Moayeri \at
              Department of Computer Sciences, Shahid Beheshti University, G.C. Tehran, Iran.\\ \email{s.latifi@mail.sbu.ac.ir, mehdidelkhosh@yahoo.com, m\_moayeri@sbu.ac.ir}              
  }            

\date{Received: date / Accepted: date}

\maketitle
\begin{abstract}
In this paper,  a nonlinear 2D Optimal Control Problem (2DOCP) is considered. The quadratic performance index of a nonlinear cost function  is endowed with the state and  control functions. In this problem, the dynamic constraint of the system is given by a classical diffusion equation. This article is concerned with a generalization of  Lagrangian functions. Besides, a Generalized Lagrangian Jacobi-Gauss-Radau (GLJGR)-collocation method is introduced and applied to solve the aforementioned 2DOCP. Based on initial and boundary conditions, the time and space  variables $t$  and $x$  are considered Jacobi-Gauss-Radau points clustered on first or end of interval respectively. Then, to solve the 2DOCP, Lagrange Multipliers are used and the optimal control problem is reduced to a parameter optimization problem. Numerical results demonstrate its accuracy, efficiency, and versatility of the presented method.
\end{abstract}
\keywords{ Lagrange Multipliers \and 2D optimal control problem\and  Generalized Lagrangian functions \and  Generalized Lagrangian Jacobi Gauss-Radau (GLJGR) collocation method. }
\subclass{49J20 \and  93C20 \and 34G20}


\section{Introduction}

In order to present 2DOCP  solved in  this manuscript, firstly, we give an introduction to the 2DOCP and provides an explanation of the functions and parameters defined in this problem. A brief review and history of these equations and spectral and Pseudospectral (PS) methods are in the following subsections.

\subsection{The governing  equations}
Optimum control problems rise in the minimization of a functional over a set of admissible control functions subject to dynamic constraints on the state and control functions \cite{agrawal2007,agrawal2006}. As the equations of dynamics in the system are reformed by a partial differential equation-- with time and space variables-- this 2DOCP is known as an optimal control of a distributed system \cite{agrawal2007}. The formulation of this optimal control problem  is \cite{meme}:
\begin{equation}\label{minproblem}
min ~~ J=\frac{1}{2}\int_0^1\int_0^R x^r\big(c_1z^2(x,t)+c_2y^2(x,t)\big)dxdt,
\end{equation}
subject to 
\begin{equation}\label{subjecto}
\frac{\partial z}{\partial t}=k(\frac{\partial^2 z}{\partial x^2}+\frac{r}{x}\frac{\partial z}{\partial t})+y(x,t),
\end{equation}
with initial and boundary   conditions
\begin{equation}\label{condition}
z(x,0)=z_0(x), 0<x<R,~~~~~~~z(R,t)=0,t>0.
\end{equation}
In fact, this is a nonlinear 2-D quadratic optimal control problem with the dynamic system of classical diffusion equation.\\
$z(x,t)$ and $y(x,t)$ are the state and control smooth functions, respectively. $c_1$ and $c_2$ are two arbitrary functions. The upper bound of variable $t$ is considered $1$.  The parameter $r$ is specified in numerical examples as $r=1$  or $r=2$.\\
The purpose of solving this problem  is to approximate the control and state functions that minimize the  $J$.   

\subsection{The literature of Optimal control problems}
Two-dimensional (2D) systems and their beneficial  applications in many different industrial fields draw the attention of scientists presently. These applications rise in heat transfer, image processing,  seismological and geophysical data processing, distributed systems, restoration of noisy images, earthquake signal processing, water stream heating gas absorption, smart materials,  and transmission lines \cite{meme2015,lewis,mars,roes}. The miscellaneous chemical, biological,  and physical problems are modeled by   diffusion processes involving control function mentioned in Eqs. (\ref{minproblem})--(\ref{condition}). By the aid of Roesser's model \cite{roes}, Attasi's model \cite{attasi1973,attasi1975} Fornasini-Marchesini's models \cite{forn1976,forn1978}, the state-space models of 2D systems are organized \cite{meme2015}. These models are used extensively to analyze controllability, stability,  observability  of  2D systems.

Remarkable studies have been done in the area of optimal controls, and excellent article are written hereby \cite{bryson,sage,agrawal1989,lotfi2,samimi}. Among these studies,  numerical techniques have been used to solve optimal control problems   \cite{agrawal1989,gre}. Moreover,  Agrawal \cite{agrawal2007}  presented a general formulation and a numerical scheme for Fractional Optimal Control for a class of distributed systems. He used eigenfunctions to develop his method. In other works, Manabe \cite{manabe}, Bode et al. \cite{bode} and Rabiee et al. \cite{rabiee} studied fractional order optimal control problems. Additionally, Mamehrashia et al. \cite{meme,meme2015} and Lotfi et al. \cite{lotfi} employed Ritz method to solve optimal control problems. With the Variational method, Yousefi et al. \cite{yusefi}  found the solution of the optimal control of linear systems, approximately. Li et al. \cite{Li} considered a continuous time 2D system and converted it to the discrete-time 2D model. In other works, Wei et al. \cite{wei} and Zhang et al. \cite{zhang} investigated an optimal control problem in continuous-time 2D Roesser's model. They employed iterative adaptive critic design algorithm and the adaptive dynamic programming method to approximate the solution. Sabeh et al. \cite{sabeh} introduced a pseudo-spectral method for the solution of distributed optimal control problem with the viscous Burgers equation

\subsection{The literature of Spectral and PS methods}
The main feature of spectral methods is to use different orthogonal polynomials/functions as trial functions. These polynomials/functions are global and infinitely differentiable. These methods are applied to 4 types of problems: periodic problems, non-periodic problems,   whole line problems and half line problems.  Trigonometric polynomials for periodic problems; classical Jacobi, ultraspherical, Chebyshev and Legendre  polynomials for non-periodic problems; Hermite polynomials for problems on the whole line; and Laguerre polynomials for problems on the half line \cite{doha2012a}. With the truncated series of smooth global functions, spectral methods are giving the solution of a particular problem \cite{parand1,parand2}. These methods,  with a relatively limited number of  degrees of freedom, provide such an accurate approximation for a smooth solution. Spectral coefficients tend to zero faster than any algebraic power of their index $n$ \cite{bhrawy3}.

Spectral methods can fall into 3 categories: Tau, Collocation and Galerkin  methods \cite{bhradoha}.
\begin{itemize}
\item The Tau spectral method is   used to approximate numerical solutions of various differential equations. This method considers   the solution as an expansion of  orthogonal polynomials/functions. Such coefficients, in this expansion, are set to  approximate the solution correctly \cite{bhrawy7}.
 
\item Collocation method  helps obtain a highly accurate solutions to nonlinear/linear differential equations \cite{bhrawy4,tal1,tal2,bhrawy5}.  Two common steps in collocation methods are: First,  suitable nodes (Gauss/Gauss-Radau/Gauss-Lobatto) are selected to restate a finite or discrete form of the differential equations.\\ 
Second,  a system of algebraic equations from the discretization of the original equation is achieved \cite{bhrawy6,parand3,parand4}.

\item 
In Galerkin Spectral method, trail and test functions are chosen the same \cite{boyd}; This method can result in a highly accurate solution.
\end{itemize}

It is said that spectral Galerkin methods are similar to Tau methods where in approximation by Tau method, differential equation is enforced \cite{bhrawy3}. 

Furthermore, some other numerical methods  like Finite difference method (FDM) and Finite element method (FEM)    need  network construction of data and they  perform locally. Although spectral methods are continuous and globally performing, they do not require network construction of data. 

As well as spectral methods, PS methods have also attracted researchers recently \cite{bhrawy8,bhrawy9,bhrawy10}. As mentioned previously, Sabeh et al. \cite{sabeh} investigated a PS method to solve optimal control problem.  PS methods are also utilized in the solution of other optimal control problems as well \cite{moh,fahroo,garg,shamsi}. These methods become popular because of their computational feasibility and efficiency.  In fact, in standard PS methods, interpolation operators are used to reducing the cost of computation of the inner product we encounter in some of the spectral methods. For this purpose, a set of distinct interpolation points $\{x_i\}_{i=0}^{n}$  is considered by which the corresponding Lagrange interpolants are achieved. Besides, when applying collocation points, $\{x_i\}_{i=0}^{n}$, the residual function is set to  vanish on the same set of points. Notwithstanding, the collocation points do not need to be chosen the same as the interpolation nodes; Indeed, just for having  the Kronecker delta property, they are considered  to be the same: as a consequence,  this property helps reduce computational cost noticeably as well \cite{delkhosh}. There are such  authors that utilized PS methods for the solution of optimal control as well. William  \cite{william} introduced a Jacobi PS method for solving an optimal control problem. He reported  that significant differences in computation time can be seen  for different parameters of the Jacobi polynomial. Garge et al. \cite{divya} presented a unified framework  for the numerical solution of optimal control problems using collocation at Legendre-Gauss (LG), Legendre-Gauss-Radau (LGR), and Legendre-Gauss-Lobatto (LGL) points and discussed the advantages of each for solving optimal control problems.  Chebyshev PS method was utilized by Fahroo et al. \cite{fahroo} to provide an optimal solution for optimal control problem.

\subsection{The main aim of this paper}
To the best of our knowledge, the use of PS methods for solving optimal control problems has been limited in the literature to either Chebyshev or Legendre methods. Noteworthy, the  PS method based on Jacobi can encompass a  wide range of other PS methods since the Legendre and Chebyshev nodes can be obtained as particular cases of the general Jacobi. This happens  when by  changing the parameters in the Jacobi polynomial a proper selection of the Jacobi parameters succeed in more accurate  real-time solutions to nonlinear optimal control problems. Meanwhile,  an arbitrary and not precise selection of nodes may result in a  poor interpolation characteristics such as the Runge phenomenon, therefore, the nodes in PS methods are selected as the Gauss-Radau points  \cite{william}.  

In this paper, we present a general formulation and a suitable numerical method called the GLJGR collocation method to solve 2DOCP for a class of distributed systems. The developed method is exponentially accurate and obtained by generalization of the classical Lagrangian polynomials. Additionally, the equation of the dynamics of optimal control problem is reformed as  a partial differential equation. 

This paper is arranged as follows: In Section 2, we present some preliminaries and drive some tools for introducing GL function, GLJGR collocation method, and their relevant derivative matrices. In Section 3, we apply the GLJGR collocation method to the solution of the 2DOCP. Section 4 shows numerical examples to demonstrate the effectiveness of the proposed method. Also, a conclusion is given in Section 5.

\section{Preliminaries, Conventions  and Notations}
In this section, we review some necessary definitions and  relevant properties of Jacobi polynomials. In the next step, we introduce Generalized Lagrangian (GL) functions. Then, we state and prove the accuracy of GL functions and develop GLJGR collocation method. Finally, in term of GLJGR collocation method, we give  a formula that expresses the   derivative matrix of the mentioned functions.

\subsection{Some properties of Jacobi Polynomials}
A basic property of the Jacobi polynomials is that they are the eigenfunctions to a singular Sturm--Liouville problem. Jacobi polynomials are defined on $[-1,1]$ and are of high interest recently   \cite{bhrawyzaky2015,bhrawyAlzaidy,BhrawyDoha2015,BhrawyAHDohaEH2015,Doha2015pseudo}.  The following recurrence relation generates the Jacobi polynomials \cite{dohajacobi}:
 \begin{equation}
 P_{k+1}^{\alpha,\beta}(x)=(a_k^{\alpha,\beta}x-b_k^{\alpha,\beta})P_k^{\alpha,\beta}-c_k^{\alpha,\beta}P_{k-1}^{\alpha,\beta}(x),~k\geq 1
 \end{equation}
 \begin{equation}
 P_0^{\alpha,\beta}(x)=1,~P_1^{\alpha,\beta}(x)=\frac{1}{2}(\alpha+\beta+2)x+\frac{1}{2}(\alpha-\beta),
 \end{equation}
where
$$a_k^{\alpha,\beta}=\frac{(2k+\alpha+\beta+1)(2k+\alpha+\beta+2)}{2(k+1)(k+\alpha+\beta+1)},$$
$$b_k^{\alpha,\beta}=\frac{(\beta^2-\alpha^2)(2k+\alpha+\beta+1)}{2(k+1)(k+\alpha+\beta+1)(2k+\alpha+\beta)},$$ 
$$c_k^{\alpha,\beta}=\frac{(k+\beta)(k+\alpha)(2k+\alpha+\beta+2)}{(k+1)(k+\alpha+\beta+1)(2k+\alpha+\beta)},$$

The Jacobi polynomials are satisfying the following identities:
\begin{eqnarray}
P_{n}^{\alpha,\beta}(-x)=(-1)^nP_{n}^{\beta,\alpha}(x),\\ P_{n}^{\alpha,\beta}(-1)=\frac{(-1)^n\Gamma(n+\beta+1)}{n!\Gamma(\beta+1)},\\ P_{n}^{\alpha,\beta}(1)=\frac{\Gamma(n+\alpha+1)}{n!\Gamma(\alpha+1)},
\end{eqnarray}
\begin{equation}
\bigg(P_{n}^{\alpha,\beta}(x)\bigg)^{(m)}=2^{-m}\frac{\Gamma(m+n+\alpha+\beta+1)}{\Gamma(n+\alpha+\beta+1)}P_{n-m}^{\alpha+m,\beta+m}(x).
\end{equation}
and its weight function is  $w^{\alpha,\beta}(x)=(1-x)^{\alpha}(1+x)^{\beta}$.\\
Moreover, the Jacobi polynomials are orthogonal on $[-1,1]$:
\begin{equation}\nonumber
\int_{-1}^{1}{P_{n}^{\alpha,\beta}(x)P_{m}^{\alpha,\beta}(x)}w^{\alpha,\beta}(x)=\delta_{m,n}\gamma_n^{\alpha,\beta},
\end{equation}
\begin{equation}
\gamma_n^{\alpha,\beta}=\frac{2^{\alpha+\beta+1}\Gamma(n+\alpha+1)\Gamma(n+\beta+1)}{(2n+\alpha+\beta+1)\Gamma(n+1)\Gamma(n+\alpha+\beta+1)},
\end{equation}
where $\delta_{m,n}$ is the Kronecker delta function. We define the weighted space $L_{w^{\alpha,\beta}}^2[-1,1]$.  The inner product and the norm of   $L_{w^{\alpha,\beta}}^2[-1,1]$ with respect to the weight function are defined as:
$$(g,h)_{w^{\alpha,\beta}}=\int_{-1}^{1}g(x)h(x){w^{\alpha,\beta}(x)}dx,~\| g\|_{w^{\alpha,\beta}}=(g,g)^{\frac{1}{2}}_{w^{\alpha,\beta}}$$

It is noted that the set of Jacobi polynomials forms a complete $L_{w^{\alpha,\beta}}^2[-1,1]$ system.

\subsection{Generalized Lagrangian (GL) functions}\label{GL}
In this section, generally,  the GL functions are introduced and the  suitable formulas for the first- and second-order derivative matrices of these functions are presented.  
\begin{definition}\label{general lagrange fomula definition}
Let $w(x)=\prod_{i=0}^N\big(u(x)-u(x_i)\big)$, then, the generalized Lagrange (GL) formula is shown as \cite{delkhosh} 
\begin{equation}\label{general lagrange fomula}
L_j^u(x)=\frac{w(x)}{ (u-u_j)\partial_xw(x_j)}=\frac{u'_jw(x)}{(u-u_j)\partial_u w(x_j)}=\kappa_j\frac{w(x)}{(u-u_j)},
\end{equation}
where $\kappa_j=\frac{u'_j}{\partial_u w(x_j)}$, and $u(x)$ is a continuous, arbitrary and sufficiently differentiable function, and $\partial_uw(x)=\frac{1}{u'}\partial_xw(x) $.

For simplicity $u=u(x)$ and $u_i=u(x_i)$ are considered.
\end{definition}

\begin{theorem}\label{derivative theorm}
Considering the GL functions $L_j^u(x)$  in Eq. (\ref{general lagrange fomula}), one can exhibit the first-order derivative matrices of GL functions as
$$D^{(1)}=[d_{kj}]\in \Re^{(n+1)\times(n+1)},~~~ 0\leq j,k \leq n,$$
where
\[   
d_{kj} = 
     \begin{cases}
      \kappa_j\frac{\partial_xw(x_j)}{u_k-u_j} ,&j\neq k, \\
      \\
       \kappa_j\frac{u'_j\partial_x^2 w(x_j)-u''_j\partial_xw(x_j)}{2{u'_j}^2}, & j=k.\\
     \end{cases}
\]
\end{theorem}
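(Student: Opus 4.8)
The plan is to treat $\kappa_j$ as a constant and to differentiate only the rational factor $w(x)/(u(x)-u_j)$ appearing in Eq.~(\ref{general lagrange fomula}). Writing $L_j^u(x)=\kappa_j\,w(x)/(u-u_j)$ and applying the quotient rule gives
\[
\frac{d}{dx}L_j^u(x)=\kappa_j\,\frac{\partial_x w(x)\,(u-u_j)-w(x)\,u'(x)}{(u-u_j)^2},
\]
which is the single identity from which both cases of the entry $d_{kj}=\big(L_j^u\big)'(x_k)$ are to be extracted. The argument then rests on two elementary properties of the nodes: first, every $x_k$ is a zero of $w$, since $w(x)=\prod_{i=0}^N\big(u(x)-u(x_i)\big)$ carries the factor $u-u(x_k)$; second, the nodes are distinct, so $u_k-u_j\neq 0$ whenever $k\neq j$.

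For the off-diagonal entries I would substitute $x=x_k$ with $k\neq j$ directly into the displayed derivative. The factor $w(x_k)$ vanishes, so the second term of the numerator drops out, while $(u_k-u_j)^2$ in the denominator is nonzero; after cancelling one power of $(u_k-u_j)$ this leaves
\[
d_{kj}=\kappa_j\,\frac{\partial_x w(x_k)}{u_k-u_j},\qquad k\neq j,
\]
the claimed off-diagonal formula (note that the surviving derivative $\partial_x w$ is evaluated at the collocation node $x_k$). This step is purely algebraic and needs no limiting process.

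The genuine work, and the main obstacle, is the diagonal entry $d_{jj}$: substituting $x=x_j$ makes both $w(x_j)$ and $u-u_j$ vanish, so the quotient above becomes the indeterminate $0/0$ and must be evaluated by a limit. My approach is to expand around $x_j$ to second order. Since $w(x_j)=0$, I set $t=x-x_j$ and write $w(x)=\partial_x w(x_j)\,t+\tfrac12\partial_x^2 w(x_j)\,t^2+O(t^3)$ and $u(x)-u_j=u'_j\,t+\tfrac12 u''_j\,t^2+O(t^3)$. Cancelling the common factor $t$, the removable singularity $w/(u-u_j)$ reduces to an analytic ratio $N(t)/M(t)$ with $N(0)=\partial_x w(x_j)$, $N'(0)=\tfrac12\partial_x^2 w(x_j)$, $M(0)=u'_j$, and $M'(0)=\tfrac12 u''_j$; the quotient rule at $t=0$ then yields
\[
\frac{N'(0)M(0)-N(0)M'(0)}{M(0)^2}
=\frac{u'_j\,\partial_x^2 w(x_j)-u''_j\,\partial_x w(x_j)}{2\,{u'_j}^2},
\]
and multiplying by $\kappa_j$ reproduces the stated diagonal formula. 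Equivalently, one may reach the same value by applying L'Hôpital's rule to $\big(L_j^u\big)'(x)$ as $x\to x_j$. I expect the careful bookkeeping of the second-order Taylor coefficients to be the only delicate point, everything else being routine.
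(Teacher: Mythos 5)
Your proof is correct and is essentially the paper's own argument: both differentiate $L_j^u(x)=\kappa_j\,w(x)/(u-u_j)$ directly, use $w(x_k)=0$ to obtain the off-diagonal entries, and resolve the $0/0$ diagonal entry by a local limit computation --- the paper applies L'H\^{o}pital's rule twice to $\partial_x L_j^u(x)$ as $x\to x_j$, which, as you note yourself, is equivalent to your second-order Taylor expansion. One point worth recording: your off-diagonal formula $\kappa_j\,\partial_x w(x_k)/(u_k-u_j)$, with the derivative of $w$ evaluated at the collocation node $x_k$, agrees with what the paper's own proof derives, so the ``$\partial_x w(x_j)$'' appearing in the theorem statement is a typo in the statement rather than a gap in your argument.
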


\begin{proof}: 
As the GL functions defined in Eq. (\ref{general lagrange fomula}), the first-order derivative formula for the case $k \neq j$ can be achieved as follows:
\begin{equation}
d_{kj}=\partial_xL_j^u(x_k)=\lim_{x\rightarrow x_k}\frac{L_j^u(x)-L_j^u(x_k)}{x-x_k}=\kappa_j\frac{\partial_xw(x_k)}{u_k-u_j}.
\end{equation}  
But, when $k=j$, with Hopital's rule: 
$$d_{jj}=\partial_xL_j^u(x_j)=\lim_{x\rightarrow x_j}\kappa_j\frac{(u-u_j)\partial_xw(x)-u'_jw(x)}{(u-u_j)^2}\overset{\mathrm{H}}{=}$$ 
$$\lim_{x\rightarrow x_j}\kappa_j\frac{(u-u_j)\partial_x^2w(x)-u''w(x)}{2u'(u-u_j)}\overset{\mathrm{H}}{=}\kappa_j\frac{u'_j\partial_x^2w(x_j)-u''_j\partial_xw(x_j)}{2{u'_j}^2}.$$
This completes the proof. $~\blacksquare$
\end{proof}

\begin{theorem}
Let $D^{(1)}$ be the above matrix (first order derivative matrix of GL functions) and define matrix $Q$ such that $Q=Diag(u'_0,u'_1,...,u'_N)$, $Q^{(1)}=Diag(u''_0,u''_1,...,u''_N)$, then,  the second-order  derivative matrix  of  GL functions can be formulated as: 
\begin{equation}
D^{(2)}=(Q^{(1)}+QD^{(1)})Q^{-1}D^{(1)}.
\end{equation}   
\end{theorem}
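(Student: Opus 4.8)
The plan is to exploit the fact that each GL function depends on $x$ only through $u=u(x)$, so that the $x$-derivative matrices arise from the ordinary Lagrange differentiation matrices in the variable $u$ via a change of variables. First I would rewrite the function in Eq.~(\ref{general lagrange fomula}) as $L_j^u(x)=\kappa_j\,w(x)/(u-u_j)=\kappa_j\prod_{i\neq j}(u-u_i)=:g_j(u)$, which shows that $L_j^u$ is a polynomial of degree $N$ in $u$ and that $\{g_j\}$ is the cardinal Lagrange basis attached to the nodes $\{u_i\}$, i.e. $g_j(u_k)=\delta_{jk}$. I would then denote by $\tilde D$ and $\tilde D^{(2)}$ the first- and second-order Lagrange differentiation matrices in the variable $u$, with entries $\tilde d_{kj}=g_j'(u_k)$ and $\tilde d^{(2)}_{kj}=g_j''(u_k)$.

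Next I would apply the chain rule. Since $L_j^u(x)=g_j(u(x))$,
\begin{equation*}
\partial_x L_j^u(x)=g_j'(u)\,u'(x),\qquad \partial_x^2 L_j^u(x)=g_j''(u)\,\big(u'(x)\big)^2+g_j'(u)\,u''(x).
\end{equation*}
Evaluating at the nodes $x_k$ and recalling $Q=\mathrm{Diag}(u'_0,\dots,u'_N)$ and $Q^{(1)}=\mathrm{Diag}(u''_0,\dots,u''_N)$ gives the matrix identities $D^{(1)}=Q\tilde D$ and $D^{(2)}=Q^2\tilde D^{(2)}+Q^{(1)}\tilde D$, where the diagonal factors act on the row (evaluation) index. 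In particular $\tilde D=Q^{-1}D^{(1)}$, which is legitimate because $u$ is a smooth, strictly monotone change of variable so that every $u'_k\neq 0$ and $Q$ is invertible.

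The crux of the argument is the classical spectral-collocation identity $\tilde D^{(2)}=(\tilde D)^2$, namely that the second-derivative matrix in the $u$-variable equals the square of the first-derivative matrix. I would justify it by observing that differentiation maps the space $\Pi_N$ of polynomials of degree $\le N$ in $u$ into $\Pi_{N-1}\subset\Pi_N$, and that interpolation at the $N+1$ nodes reproduces every element of $\Pi_{N-1}$ exactly; hence applying the first-derivative matrix twice reproduces the exact second derivative on the interpolation space, which is precisely $\tilde D^{(2)}$. This is the step that genuinely uses the cardinal (Kronecker-delta) structure of the $g_j$ established in the first paragraph, and it is the main point to get right, since the identity fails for a non-cardinal scaling of the basis.

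Finally I would substitute. Using $\tilde D=Q^{-1}D^{(1)}$ and $\tilde D^{(2)}=(Q^{-1}D^{(1)})^2$,
\begin{equation*}
D^{(2)}=Q^2\,(Q^{-1}D^{(1)})(Q^{-1}D^{(1)})+Q^{(1)}(Q^{-1}D^{(1)})=Q D^{(1)}Q^{-1}D^{(1)}+Q^{(1)}Q^{-1}D^{(1)},
\end{equation*}
and factoring $Q^{-1}D^{(1)}$ on the right yields $D^{(2)}=\big(Q D^{(1)}+Q^{(1)}\big)Q^{-1}D^{(1)}=\big(Q^{(1)}+Q D^{(1)}\big)Q^{-1}D^{(1)}$, the two forms coinciding because the additive terms commute. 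This is exactly the claimed formula, completing the proof.
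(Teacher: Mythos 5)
Your proof is correct, but it cannot be compared with the paper's in any meaningful way, because the paper does not prove this theorem at all: its entire proof is the line ``See Ref.~\cite{delkhosh}'', a submitted manuscript. Your argument is therefore the only self-contained one on the table, and it is the natural one: write $L_j^u(x)=g_j(u(x))$ with $\{g_j\}$ the cardinal Lagrange basis in the variable $u$, read off $D^{(1)}=Q\tilde{D}$ and $D^{(2)}=Q^{2}\tilde{D}^{(2)}+Q^{(1)}\tilde{D}$ from the chain rule, invoke the collocation identity $\tilde{D}^{(2)}=\tilde{D}^{2}$ (nodal differentiation is exact on polynomials of degree at most $N$ in $u$, and differentiation maps that space into itself), and substitute; the closing algebra $Q^{2}(Q^{-1}D^{(1)})^{2}+Q^{(1)}Q^{-1}D^{(1)}=(Q^{(1)}+QD^{(1)})Q^{-1}D^{(1)}$ is right. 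Beyond filling the paper's gap, your write-up surfaces the two hypotheses the statement silently needs: $Q$ must be invertible (i.e.\ $u_k'\neq 0$ at the nodes), and the basis must have the Kronecker-delta property, without which the key lemma $\tilde{D}^{(2)}=\tilde{D}^{2}$ breaks down --- a point you correctly identify as the crux.

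One caveat: you present cardinality, $g_j(u_k)=\delta_{jk}$, as if it followed from Eq.~(\ref{general lagrange fomula}); as written, it does not. Cardinality requires $\kappa_j=1/\partial_u w(x_j)$, whereas Definition~\ref{general lagrange fomula definition} literally sets $\kappa_j=u_j'/\partial_u w(x_j)$, which gives $g_j(u_k)=u_j'\,\delta_{jk}$. Under that literal scaling your lemma fails and so does the theorem itself: for $u(t)=2t-1$ one has $Q=2I$, $Q^{(1)}=0$, and the right-hand side $(Q^{(1)}+QD^{(1)})Q^{-1}D^{(1)}$ comes out as exactly $2D^{(2)}$. The stray factor $u_j'$ is evidently a typo in the paper: the explicit entries of $D$ in Section~\ref{GLJGR section} and the identity $\hat{L}_m^u(0)=(1,0,\dots,0)^{T}$ used in Eq.~(\ref{intitlagra}) are consistent only with the cardinal normalization you adopted. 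So your reading is the right one, but a careful version of your proof should state that normalization as an explicit assumption (or correction) rather than attribute it to the paper's formula for $\kappa_j$, since the truth of the statement being proved depends on it.
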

\begin{proof}:
See Ref. \cite{delkhosh}. $\blacksquare$
\end{proof}
For simplicity, from now on $D^{(1)}$ is considered $D$. 

\subsection{Generalized Lagrangian Jacobi Gauss-Radau (GLJGR) collocation method }\label{GLJGR section}
It is a well-established fact that a proper choice of collocation points is crucial in terms of accuracy and computational stability of the approximation by Lagrangian basis \cite{sabeh}. As  a good choice of such collocation points, we can refer to the well-known Gauss-Radau points in  which points lie inside (a,b) and one point is clustered near the endpoints. In this sequel, we use Jacobi-Gauss-Radau nodes.

In case of GLJGR collocation method, $w(x)$ in Eq. (\ref{general lagrange fomula}) can be considered as two approaches:  
$$w(x)=\lambda P_{n}^{\alpha,\beta+1}(u)(u-u_n),$$
and
$$w(x)=\lambda (u-u_0)P_{n}^{\alpha,\beta+1}(u),$$
where $\lambda$  is a real constant. As a matter of simplification, we write
\begin{equation}\label{J}
G(u)=P_{n}^{\alpha,\beta+1}(u),
\end{equation}
with the following important properties:
\begin{equation}
\partial_x G(u)=u'\frac{\Gamma(\alpha+\beta+n+3)}{2\Gamma(\alpha+\beta+n+2)}P_{n-1}^{\alpha+1,\beta+2}(u),
\end{equation} 
\begin{equation}
\partial_x^2 G(u)=\frac{\Gamma(\alpha+\beta+n+3)}{4\Gamma(\alpha+\beta+n+2)}\bigg(2u''P_{n-1}^{\alpha+1,\beta+2}(u)+(u')^2(\alpha+\beta+n+3)P_{n-2}^{\alpha+2,\beta+3}(u)\bigg),
\end{equation} 

Lets speak of the first approach. Assume
$$w(x)=\lambda P_{n}^{\alpha,\beta+1}(u)(u-u_n),$$
then, we have:
\begin{equation}
\partial_x w(x)=\lambda \big[u'G(u)+(u-u_n)\partial_xG(u)~\big],
\end{equation}
\begin{eqnarray}\label{W}
&\partial_x^2 w(x)=\lambda \big[u''G(u)+2u'\partial_x G(u)+(u-u_n)\partial_x^2G(u)\big].
\end{eqnarray}
\begin{equation}\label{tsd}
G(u_n)=\frac{\Gamma(\alpha+1+n)}{(n)!\Gamma(\alpha+1)},~~~
\partial_x G(u_n)=u'_n\frac{\Gamma(\alpha+\beta+n+3)}{2\Gamma(\alpha+\beta+n+2)}\frac{\Gamma(\alpha+1+n)}{(n-1)!\Gamma(\alpha+2)}.
\end{equation} 
 
Recalling that $\{P_{n}^{\alpha,\beta+1}(u_j)=0\}_{j=0}^{n-1}$ and using formulas in Eqs. (\ref{J})--(\ref{tsd}), we find the entry of the first-order derivative matrix of GL functions as:
\[   
d_{kj} = 
\begin{cases}
\frac{u'_kP_{n-1}^{\alpha+1,\beta+2}(u_k)(n!)(\alpha+\beta+n+2)\Gamma(\alpha+1)}{2\Gamma(\alpha+1+n)} ,&j=n, 0\leq k\leq n-1, \\
\frac{2u_k'\Gamma(\alpha+n+1)}{\Gamma(\alpha+1)(n!)(\alpha+\beta+n+2)P_{n-1}^{\alpha+1,\beta+2}(u_j)(u_k-u_j)(u_j-u_k)}, & k=n, 0\leq j \leq n-1,\\
\frac{u_k'(u_k-u_n)}{(u_j-u_n)(u_k-u_j)}\frac{P_{n-1}^{\alpha+1,\beta+2}(u_k)}{P_{n-1}^{\alpha+1,\beta+2}(u_j)}    ,& 0\leq j\neq k\leq n-1,  \\
\frac{u_j'}{u_j-u_n}+u_j'(\alpha+\beta+n+3)\frac{P_{n-3}^{\alpha+3,\beta+3}(u_j)}{4P_{n-2}^{\alpha+2,\beta+2}(u_j)},&0\leq k=j\leq n-1,\\
\frac{u_k' (n-1)(\alpha+\beta+n+2)}{2(\alpha+1) },&   ~~k=j=n ,\\
\end{cases}
\]

Similar to this fasion, for the second approach one can write:
$$w(x)=\lambda (u-u_0)P_{n}^{\alpha,\beta+1}(u),$$
\begin{equation}
\partial_x w(x)=\lambda \big[u'G(u)+(u-u_0)\partial_xG(u)~\big],
\end{equation}
\begin{eqnarray}\label{W}
&\partial_x^2 w(x)=\lambda \big[u''G(u)+2u'\partial_x G(u)+(u-u_0)\partial_x^2G(u)\big].
\end{eqnarray} 
\begin{equation}
G(u_0)=\frac{\Gamma(\beta+2+n)(-1)^n}{(n)!\Gamma(\beta+2)},~~~
\partial_x G(u_0)=u'_0\frac{\Gamma(\alpha+\beta+n+3)}{2\Gamma(\alpha+\beta+n+2)}\frac{(-1)^{n-1}\Gamma(\beta+2+n)}{(n-1)!\Gamma(\beta+3)}.
\end{equation} 
 
Now this is obvious that $\{P_{n}^{\alpha,\beta+1}(u_j)=0\}_{j=1}^{n}$.   Therefore, by the second approach, the entries of defined matrix $D$ can be filled as:   
\[   
d_{kj} = 
\begin{cases}
\frac{u'_kP_{n-1}^{\alpha+1,\beta+2}(u_k)(n!)(\alpha+\beta+n+2)\Gamma(\beta+2)}{2\Gamma(\beta+2+n)(-1)^n} ,&j=0, 1\leq k\leq n, \\
\frac{2u_k'\Gamma(\beta +n+2)(-1)^n}{\Gamma(\beta+2)(n!)(\alpha+\beta+n+2)P_{n-1}^{\alpha+1,\beta+2}(u_j)(u_k-u_j)(u_j-u_k)}, & k=0, 1\leq j \leq n,\\
\frac{u_k'(u_k-u_0)}{(u_j-u_0)(u_k-u_j)}\frac{P_{n-1}^{\alpha+1,\beta+2}(u_k)}{P_{n-1}^{\alpha+1,\beta+2}(u_j)}    ,& 1\leq j\neq k\leq n,  \\
\frac{u_j'}{u_j-u_0}+u_j'(\alpha+\beta+n+3)\frac{P_{n-3}^{\alpha+3,\beta+3}(u_j)}{4P_{n-2}^{\alpha+2,\beta+2}(u_j)},&1\leq k=j\leq n,\\
\frac{-u_k' (n)(\alpha+\beta+n+2)}{2(\beta+2) },&   ~~k=j=0 ,\\ 
\end{cases}
\]

More specifically,  Legendre,  Chebyshev, and  ultraspherical polynomials can be obtained as special cases from the proposed method. These cases are summarized in the following corollaries:\\
\textbf{Corollary 1:} If $\alpha=\beta$, we have the  all the mentioned formulas of GL functions, $D$, $D^{(2)}$ for Gegenbauer (ultraspherical) polynomials (symmetric Jacobi polynomials).\\
\textbf{Corollary 2:} If $\alpha=\beta=0$, we have the  all the mentioned formulas of GL functions, $D$, $D^{(2)}$ for Legendre case.\\
\textbf{Corollary 3:} If $\alpha=\beta=-0.5$, we have  all the mentioned formulas of GL functions, $D$, $D^{(2)}$ for  Chebyshev case (the 1st kind). \\
\textbf{Corollary 4:} If $\alpha=\beta=0.5$, we have  all the mentioned formulas of GL functions, $D$, $D^{(2)}$ for  Chebyshev case (the 2nd kind).\\
\textbf{Corollary 5:} If $\alpha=-0.5$, $\beta=0.5$, we have  all the mentioned formulas of GL functions, $D$, $D^{(2)}$ for  Chebyshev case (the 3rd kind).\\
\textbf{Corollary 6:} If $\alpha=0.5$, $\beta=-0.5$, we have  all the mentioned formulas of GL functions, $D$, $D^{(2)}$ for  Chebyshev case (the 4-th kind).

\subsection{Operational matrix of GL functions}
Defining the one-column vectors 
\[
\hat{L}_n^u(x)=\begin{bmatrix}
  L_0^u(x)\\ L_1^u(x)\\ \vdots\\ \L_n^u(x)\\
  \end{bmatrix},
\hat{H}_n^u(x)=\begin{bmatrix}
  \partial_xL_0^u(x)\\ \partial_xL_1^u(x)\\ \vdots\\\partial_xL_n^u(x)\\
  \end{bmatrix},
  C=\begin{bmatrix}
  c_0\\ c_1\\ \vdots\\ c_n\\
  \end{bmatrix},\]
for approximation of a function $\xi(x)$ can write
$$\xi(x)=\sum_{i=0}^{n}c_iL_i^u(x)=C^T\hat{L}_n^u(x),$$
and similarly for the derivative of this function can rewrite it as 
\begin{equation}
\partial_x\xi(x)=C^TK\hat{L}_i^u(x),
\end{equation}
where $K\in \Re^{(n+1)\times(n+1)}$ is the operational matrix of derivative where
$$K\hat{L}_n^u(x)=\hat{H}_n^u(x)$$
and in other words,  
\[
\begin{bmatrix}
  K_{00}&K_{01}&\dots&K_{0n}\\
  K_{10}&K_{11}&\dots&K_{1n}\\
  \vdots\\
   K_{n0}&K_{n1}&\dots&K_{nn}
  \end{bmatrix}\begin{bmatrix}
  L_0^u(x)\\L_1^u(x)\\ \vdots\\L_n^u(x)\\
  \end{bmatrix} =\begin{bmatrix}
  \partial_xL_0^u(x)\\ \partial_xL_1^u(x)\\ \vdots\\\partial_xL_n^u(x)\\
  \end{bmatrix}.
\]
 Taking the $j$-th row 
\[
\begin{bmatrix}
  K_{j0}&K_{j1}&\dots&K_{jn}
  \end{bmatrix}\begin{bmatrix}
  L_0^u(x)\\L_1^u(x)\\ \vdots\\L_n^u(x)\\
  \end{bmatrix} = \partial_xL_j^u(x)  ,
\]
by collocating the  Jacobi-Gauss-Radau nodes ($\{x_i\}_{i=0}^{n}$)   in  this equation, we obtain:
$$K_{ji}=\partial_xL_j^u(x_i),$$
$$K_{ji}=d_{ij}.$$
This means that the operational matrix for these functions are 
\begin{equation}\label{k1}
K=D^{T},
\end{equation}
where $D$ is defined in Section \ref{GLJGR section}. Similarly, for  $\partial_{xx}\xi(x)$ can say 
\begin{equation}\label{k2}
\partial_{xx}\xi(x)=C^T\big(D^{(2)}\big)^{T}\hat{L}_i^u(x),
\end{equation} 
where $D^{(2)}$ is defined and found in Section \ref{GLJGR section}.


\section{Numerical Method}
The main objective of this section is to develop the GLJGR collocation method to solve 2DOCP. In this section, firstly,  a promising function approximation method has been presented. Then, GLJGR collocation is implemented so as to accomplish the introduction of the presented numerical method  for 2DOCP of interest.
\subsection{Function approximation}
If $H=L^2(\eta)$, $\eta=0\cup (0,1)\times (0,1)\cup 1$, where
$$s=\{L_i^u(x)L_j^u(t)~ | ~0\leq i\leq n, 0\leq j\leq m\}$$
$s\in H $, $s$ is the set of GL functions product and 
$$V_{nm}=Span\{L_i^u(x)L_j^u(t) ~| ~0\leq i \leq n ,~0\leq j \leq m\},$$ 
where $V_{nm}$ is a finite dimensional vector space. For any $q\in S$, one can find the best approximation of $q$ in space $V_{nm}$ as $P_{nm}(x,t)$ such that
$$\exists P_{nm}(x,t) \in V_{nm}, \forall l_{nm}(x,t)\in V_{nm}, \| q-P_{nm}(x,t)\|_2 \leq \| q-l_{nm}(x,t)\|_2$$
Therefore, for any $P_{nm}(x,t)\in V_{nm}$ can write
\begin{equation}
P_{nm}(x,t)\simeq \sum_{i=0}^{n}\sum_{j=0}^{m} c_{ij}L_i^u(x)L_j^u(t)=(\hat{L}_n^u(x))^TC\hat{L}_m^u(t)
\end{equation}
in which  $C$ is a  matrix  of $\Re^{(m+1)\times (n+1)}$ and $c_{ij}$ are the relevant coefficients.
$L_i^u(x)$ is considered by the first approach mentioned in subsection \ref{GLJGR section} in which $u(x)=\frac{2}{R}x-1$, and $L_j^u(t)$ is based on the second approach and considered as $u(t)=2t-1$.


\subsection{Implementation of GLJGR collocation method for solving the 2DOCP}
Now, for approximation of state and control functions 
\begin{equation}\label{u}
y(x,t)\simeq y_{nm}(x,t)=\sum_{i=0}^{n}\sum_{j=0}^{m} a_{ij}L_i^u(x)L_j^u(t)=(\hat{L}_n^u(x))^TA\hat{L}_m^u(t)
\end{equation}
\begin{equation}\label{z}
z(x,t)\simeq z_{nm}(x,t)=\sum_{i=0}^{n}\sum_{j=0}^{m} b_{ij}L_i^u(x)L_j^u(t)=(\hat{L}_n^u(x))^TB\hat{L}_m^u(t)
\end{equation}
where
  \[
A=\begin{bmatrix}
a_{00}&a_{01}&\dots&a_{0m}\\
a_{10}&a_{11}&\dots&a_{1m}\\
\vdots\\
a_{n0}&a_{n1}&\dots&a_{nm}
\end{bmatrix},
B=\begin{bmatrix}
b_{00}&b_{01}&\dots&b_{0m}\\
b_{10}&b_{11}&\dots&b_{1m}\\
\vdots\\
b_{n0}&b_{n1}&\dots&b_{nm}
\end{bmatrix},
\hat{L}_m^u(t)=\begin{bmatrix}
L_0^u(t)\\L_1^u(t)\\ \vdots\\L_m^u(t)\\
\end{bmatrix},
\hat{L}_n^u(x)=\begin{bmatrix}
L_0^u(x)\\L_1^u(x)\\ \vdots\\L_n^u(x)\\
\end{bmatrix}    
\]
We define residual functions $res(x,t)$ by substituting Eqs. (\ref{u}) and (\ref{z}) in Eq. (\ref{subjecto}) 
 \begin{equation}\label{res_nm}
 res(x,t)=-\frac{x\partial z_{nm}(x,t)}{\partial t}+k(\frac{x\partial^2 z_{nm}(x,t)}{\partial x^2}+r\frac{\partial z_{nm}(x,t)}{\partial t})+xy_{nm}(x,t),
\end{equation}
in terms of Eqs. (\ref{k1}) and  (\ref{k2}) one can read
\begin{equation}
\frac{\partial z_{nm}(x,t)}{\partial x}=\bigg(D^T\hat{L}_n^u(x)\bigg)^TB\hat{L}_m^u(t)=\hat{L}_n^u(x)^TDB\hat{L}_m^u(t),
\end{equation}
\begin{equation}
\frac{\partial^2 z_{nm}(x,t)}{\partial x^2}=\bigg((D^{(2)})^T\hat{L}_n^u(x)\bigg)^TB\hat{L}_m^u(t)=\hat{L}_n^u(x)^TD^{(2)}B\hat{L}_m^u(t),
\end{equation}  
\begin{equation}
\frac{\partial z_{nm}(x,t)}{\partial t}=\hat{L}_n^u(x)^TB\hat{D}^T\hat{L}_m^u(t),
\end{equation}
$A,B\in \Re^{(n+1)\times (m+1)} $,
$D\in \Re^{(n+1)\times (n+1)} $ and
$\hat{D}\in \Re^{(m+1)\times (m+1)} $
so the Eq. (\ref{res_nm}) can be restated as 
\begin{eqnarray}\label{be}
&& res(x,t)=-x\hat{L}_n^u(x)^TB\hat{D}^T\hat{L}_m^u(t)+k\bigg(x\hat{L}_n^u(x)^TD^{(2)}B\hat{L}_m^u(t)\nonumber\\&&+r\hat{L}_n^u(x)^TB\hat{D}^T\hat{L}_m^u(t)\bigg)+x\hat{L}_n^u(x)^TA\hat{L}_m^u(t),
\end{eqnarray}
and the initial and boundary conditions of the problem are obtained as
\begin{equation}
z_{nm}(x,0)\simeq z_0(x), 0<x<R,~~~~~~~z_{nm}(R,t)\simeq 0,t>0,
\end{equation}
and within the  assumption of Eq. (\ref{z}) and Eq. (\ref{u})
\begin{equation}\label{condition_nm}
\hat{L}_n^u(x)^TB\hat{L}_m^u(0)\simeq z_0(x),~~\hat{L}_n^u(R)^TB\hat{L}_m^u(t)\simeq 0.
\end{equation}
As $0$ and $ R $ are the  $ t_0 $ and $ x_n $, with the aid of the characteristic of Lagrange polynomials, we write
\begin{equation}\label{intitlagra}
  \hat{L}_n^u(x)^TB
    \begin{bmatrix}
	1\\0\\ \vdots\\0\\
\end{bmatrix} =\hat{L}_n^u(x)^TB[:,0],~~~   
 \begin{bmatrix}
 0&0&...&1
 \end{bmatrix}B\hat{L}_m^u(t)=B[n,:]\hat{L}_m^u(t),
\end{equation}
where $ B[:,0] $ and $B[n,:]$ are the first column and $n$-th row of Matrix $B$.
 
Then, with $n+1$ collocation nodes in $x$ space and  $m+1$ collocation nodes in $t$ space,  a set of  algebraic equations is constructed by using Eq. (\ref{be}) together with the conditions in Eq. (\ref{condition_nm}) ( simplification of Eq. (\ref{intitlagra}) is also used).
\begin{equation}\label{dastga}
 \begin{cases}
 F[i]=\hat{L}_n^u(x_i)^TB[:,0]- z_0(x_i)=B_{i0}- z_0(x_i), ~i=0,\dots,n,\\
F[n+j]=B[n,:]\hat{L}_m^u(t_j)=B_{nj}, ~j=1,\dots,m,\\
F[n+m+(i)(m+1)+j+1]=res(x_i,t_j),~i=0,...,n,~j=0,...,m,\\
 \end{cases}
 \end{equation}
in which  $ res(x_i,t_j) $ can be considered as
 \begin{equation}\nonumber
 res(x_i,t_j)=-x_iB[i,:]\hat{D}^T[:,j]+k\bigg(x_iD^{(2)}[i,:]B[:,j]+rB[i,:]\hat{D}^T[:,j]\bigg)+x_ia_{ij}.
\end{equation}
or
 \begin{equation}
 res(x_i,t_j)=(kr-x_i)B[i,:]\hat{D}^T[:,j]+kx_iD^{(2)}[i,:]B[:,j]+x_ia_{ij}.
\end{equation}
The reason why in the second case of (\ref{dastga}) $j$ is started from 1 is that: in both examples we will consider later, in the first case of (\ref{dastga}) we have $z_0(x_n=R)=0$ and this makes a redundancy with the second case of (\ref{dastga}) at $j=0$. To avoid this, $j$ is started from 1.  

In what follows, we have used $x_i$ and $t_j$ which are considered as the following statements. Firstly,
\begin{equation}
\{x_i\}_{i=0}^{n}=\{x|(P_{n}^{\alpha,\beta+1}(u)(u-u_n))=0\},
\end{equation}
as mentioned $x\in (0,R]$, then $u(x)=\frac{2x}{R}-1$. Therefore,
\begin{equation}
\bigg(\{x_i\}_{i=0}^{n-1}=\{x|P_{n}^{\alpha,\beta+1}(\frac{2x}{R}-1)=0\}\bigg)\cup\{ x_n=R\}.
\end{equation}
One can see that this assignment is based on the first approach in Section \ref{GLJGR section}.\\
With the same fashion, for variable $t$ we have
\begin{equation}
\{t_j\}_{j=0}^{m}=\{t|(u-u_0)P_{n}^{\alpha,\beta+1}(u)=0\}
\end{equation}
where $t \in [0,1)$ and $u(t)=2t-1$.
\begin{equation}
\{ t_0=0\}\cup\bigg(\{t_j\}_{j=1}^{m}= \{t|P_{n}^{\alpha,\beta+1}(2t-1)=0\}\bigg)
\end{equation}
Again, one can consider  this assignment  based on the second approach in Section \ref{GLJGR section}.\\
At the next step we approximate the integral existing in the 2DOCP. For this, we exploit Gauss Jacobi quadratures.\\
For estimating an integral by Gauss Jacobi quadratures, we do as follow: 
\begin{equation*}
\int_a^bf(v)w^{\alpha,\beta}(\frac{2(v-a)}{b-a}-1)dv\simeq(\frac{b-a}{2})\int_{-1}^1f(\frac{b-a}{2}s+\frac{b+a}{2})w^{\alpha,\beta}(s)ds,
\end{equation*}
\begin{equation}\label{numerical_integration}
=\frac{b-a}{2}\sum_{i=0}^{N}f(\frac{b-a}{2}s_i+\frac{b+a}{2})\varpi_i=\frac{b-a}{2}\sum_{i=0}^{N}f(v_i)\varpi_i,
\end{equation}
where $v\in [a,b]$ and $s\in [-1,1]$
, and Eq. (\ref{numerical_integration}) is exact when $degree \big(f(v)\big)\leq 2N+1$ for Gauss Jacobi quadratures.\\
$\{s_i\}_{i=0}^{N}$ are Gauss Jacobi nodes and their relevant weights $\{\varpi\}_{i=0}^{N}$ are \cite{shen}
\begin{equation}
\varpi_i=\frac{\Gamma(N+\alpha+2)\Gamma(N+\beta+2)}{\bigg(P_{N+1}^{\alpha,\beta}(s_i)\bigg)'(1-s_i^2)}\frac{2^{\alpha+\beta+1}}{\Gamma(N+2+\alpha+\beta)\Gamma(N+2)},
\end{equation}
The cost functional $J$ is estimated by a numerical integration method. For this, we applied  Gauss-Jacobi quadratures in Eq. (\ref{numerical_integration}) for both variables $t$ and  $x$.
\begin{equation*}\label{min_nm}
min ~~ J=\frac{1}{2}\int_0^1\int_0^R x^r\big(c_1z_{nm}^2(x,t)+c_2y_{nm}^2(x,t)\big)dxdt,
\end{equation*}
\begin{equation}\label{takhmin_integral}
\simeq \frac{R}{8}\bigg(c_1\sum_{i=0}^{N}\sum_{j=0}^{M}\frac{\varpi_x^i {(\frac{\hat{x_i}+1}{2})}^r \varpi_t^j z_{nm}^2(\frac{\hat{x_i}+1}{2},\frac{\hat{t_j}+1}{2})}{w^{\alpha,\beta}(\hat{x_i})w^{\alpha,\beta}(\hat{t_i})}+c_2\sum_{i=0}^{N}\sum_{j=0}^{M}\frac{\varpi_x^i {(\frac{\hat{x_i}+1}{2})}^r \varpi_t^j y_{nm}^2(\frac{\hat{x_i}+1}{2},\frac{\hat{t_j}+1}{2})}{w^{\alpha,\beta}(\hat{x_i})w^{\alpha,\beta}(\hat{t_i})}\bigg)
\end{equation}
where $\varpi_x^i$, $\varpi_t^j$, $\hat{x_i}$, $\hat{t_j}$ are as follows
\begin{equation}
\{\hat{x_i}\}_{i=0}^{N}=\{x|P_{N+1}^{\alpha,\beta}(x)=0\},
\end{equation}
\begin{equation}
\{\hat{t_j}\}_{j=0}^{M}=\{t|P_{M+1}^{\alpha,\beta}(t)=0\},
\end{equation}
\begin{equation}
\varpi_x^i=\frac{\Gamma(N+\alpha+2)\Gamma(N+\beta+2)}{\bigg(P_{N+1}^{\alpha,\beta}(\hat{x_i})\bigg)'(1-\hat{x_i}^2)}\frac{2^{\alpha+\beta+1}}{\Gamma(N+2+\alpha+\beta)\Gamma(N+2)},
\end{equation}
\begin{equation}\label{hdd}
\varpi_t^j=\frac{\Gamma(M+\alpha+2)\Gamma(M+\beta+2)}{\bigg(P_{M+1}^{\alpha,\beta}(\hat{t_j})\bigg)'(1-\hat{t_j}^2)}\frac{2^{\alpha+\beta+1}}{\Gamma(M+2+\alpha+\beta)\Gamma(M+2)}.
\end{equation}
From Eq. (\ref{takhmin_integral}) to Eq. (\ref{hdd}) for both variables $t$ and  $x$ we consider  $15$ Gauss Jacobi nodes ($M=N=14$).\\
Thus, on the basis of what we have just discussed, the optimal control problem is reduced to a parameter optimization problem. This can be stated as follows:
\begin{equation}
L(A,B,\hat{\lambda})=J+\sum_{i=0}^{2(n+m)+nm+1}\lambda_iF[i]
\end{equation}
$\hat{\lambda}=\{\lambda_i\}_{i=0}^{2(n+m)+nm+1}$ are Lagrange multipliers. Therefore, the   minimization problem can be under these new conditions
\[
   \begin{dcases}
\frac{\partial L}{\partial a_{ij}}=0,~~i=0,\dots,n,~j=0,\dots,m,\\
\frac{\partial L}{\partial b_{ij}}=0,~~i=0,\dots,n,~j=0,\dots,m,\\
\frac{\partial L}{\partial \lambda_{i}}=0,~~i=0,\dots,2(n+m)+nm+1,
   \end{dcases}
\]
produce a system of $4(n+m+1)+3nm$  algebraic equations which can be solved by a mathematical software for achieving the unknowns. The solution of this system is given by using Maple software.

\section{Numerical Examples}
In this section,  the GLJGR collocation method is used to solve  2 cases of aforementioned 2DOCP in Eq. (\ref{minproblem}). The aim is to find the  state and control functions $z(x,t)$, $y(x,t)$ that minimize the cost function $J$.
\begin{example}\label{ex1}
\end{example}
Consider the forementioned  2DOCP with $c_1=c_2=r=R=k=1$ and initial condition \cite{meme,ozmir} $$z(x,0)=z_0(x)=1-\big(\frac{x}{R}\big)^2,~0<x<R.$$
So, the optimal control problem of Eq. (\ref{minproblem}) would be
\begin{equation}
min ~~ J=\frac{1}{2}\int_0^1\int_0^1 x\big(z^2(x,t)+y^2(x,t)\big)dxdt,
\end{equation}
subject to 
\begin{equation}
x\frac{\partial z}{\partial t}=x(\frac{\partial^2 z}{\partial x^2}+\frac{1}{x}\frac{\partial z}{\partial t})+xy(x,t),
\end{equation}
with boundary and initial conditions
\begin{equation}
z(x,0)=z_0(x)=1-x^2, 0<x<1,~~~~~~~z(1,t)=0,t>0.
\end{equation}

Consider the assumptions  mentioned in Example  \ref{ex1}. with the methodology presented in Section 2, 3, we approximate the function $z(x,t)$ and $y(x,t)$. In Table \ref{table:ex1:overall} the presented method is used to solve the 2DOCP of Example  \ref{ex1}. This table, by presenting the value of cost functional, simply shows the accuracy of the presented method for different choices of $n$ and $m$. The effect of $\alpha$ and $\beta$ shown as well to provide Chebyshev (all four kinds), Legendre cases and other different cases. As the aim of this paper is to find $z(x,t)$, $y(x,t)$ in order to minimize $J$, we plotted these state and control functions in Fig. \ref{fig:ex1}(a,c). Also, the graph of these function for  different  number of basis is illustrate in Fig. \ref{fig:ex1}(b,d); they  are the surface plots of the state and control functions. The comparison with Ritz method by Mamehrashi \cite{meme} has been made in Table \ref{table:ex1:comparison}. These results show that the presented method provides a more accurate solution. Similar to what has been concluded in \cite{meme}, we resulted that the control and state functions initially have distinct values over the $x$ axis and as time goes by they tend to reach the same value: This phenomenon is representative of a diffusion process. 

\begin{table}[!ht]
\centering
\caption{Numerical result of Example  \ref{ex1}.}
\label{table:ex1:overall}
\scalebox{0.85}{
\begin{tabular}{ccccccccccccccc}
\hline
$n$&$m$&$\alpha$&$\beta$&$J$&$n$&$m$&$\alpha$&$\beta$&$J$&$n$&$m$&$\alpha$&$\beta$&$J$\\
\hline
\multirow{10}{*}{2} & \multirow{10}{*}{2} & 0 & 0 & 0.0248814 & \multirow{10}{*}{5} & \multirow{10}{*}{5} & 0 & 0 & 0.01391215 & \multirow{10}{*}{5} & \multirow{10}{*}{7} & 0 & 0 & 0.0089648 \\
 &  & -0.5 & -0.5 & 0.0284523 &  &  & -0.5 & -0.5 & 0.01567798 &  &  & -0.5 & -0.5 & 0.0095673 \\
 &  & 0.5 & 0.5 & 0.0230089 &  &  & 0.5 & 0.5 & 0.01248992 &  &  & 0.5 & 0.5 & 0.0084049 \\ 
 &  & -0.5 & 0.5 & 0.0278755 &  &  & -0.5 & 0.5 & 0.01794318 &  &  & -0.5 & 0.5 & 0.0105544 \\ 
 &  & 0.5 & -0.5 & 0.0224106 &  &  & 0.5 & -0.5 & 0.01128294 &  &  & 0.5 & -0.5 & 0.0079716 \\  
 &  & 0 & 1 & 0.0255469 &  &  & 0 & 1 & 0.01516258 &  &  & 0 & 1 & 0.0092312 \\ 
 &  & 1 & 0 & 0.0209952 &  &  & 1 & 0 & 0.01069205 &  &  & 1 & 0 & 0.0078231 \\ 
 &  & 0 & 2 & 0.0265474 &  &  & 0 & 2 & 0.01506972 &  &  & 0 & 2 & 0.0084796 \\  
 &  & 2 & 0 & 0.0205804 &  &  & 2 & 0 & 0.00913921 &  &  & 2 & 0 & 0.0074379 \\  
 &  & 3 & 1 & 0.0193088 &  &  & 3 & 1 & 0.00859934 &  &  & 3 & 1 & 0.0071598 \\ \hline
\multirow{10}{*}{7} & \multirow{10}{*}{5} & 0 & 0 & 0.0186896 & \multirow{10}{*}{7} & \multirow{10}{*}{10} & 0 & 0 & 0.00813737 & \multirow{10}{*}{10} & \multirow{10}{*}{10} & 0 & 0 & 0.0089628 \\ 
 &  & -0.5 & -0.5 & 0.0207502 &  &  & -0.5 & -0.5 & 0.00862000 &  &  & -0.5 & -0.5 & 0.0095609 \\ 
 &  & 0.5 & 0.5 & 0.0168244 &  &  & 0.5 & 0.5 & 0.00765917 &  &  & 0.5 & 0.5 & 0.0083101 \\ 
 &  & -0.5 & 0.5 & 0.0223249 &  &  & -0.5 & 0.5 & 0.00777906 &  &  & -0.5 & 0.5 & 0.0091915 \\ 
 &  & 0.5 & -0.5 & 0.0163430 &  &  & 0.5 & -0.5 & 0.00850356 &  &  & 0.5 & -0.5 & 0.0087234 \\  
 &  & 0 & 1 & 0.0195875 &  &  & 0 & 1 & 0.00765102 &  &  & 0 & 1 & 0.0082872 \\  
 &  & 1 & 0 & 0.0151638 &  &  & 1 & 0 & 0.00757913 &  &  & 1 & 0 & 0.0083260 \\ 
 &  & 0 & 2 & 0.0189639 &  &  & 0 & 2 & 0.00652118 &  &  & 0 & 2 & 0.0067052 \\ 
 &  & 2 & 0 & 0.0128480 &  &  & 2 & 0 & 0.00733578 &  &  & 2 & 0 & 0.0079058 \\ 
 &  & 3 & 1 & 0.0114265 &  &  & 3 & 1 & 0.00699366 &  &  & 3 & 1 & 0.0072780 \\ \hline
\end{tabular}}
\end{table}

\begin{figure}[!ht]
    \begin{subfigure}[b]{0.45\textwidth}
        \centering
        \includegraphics[width=\textwidth]{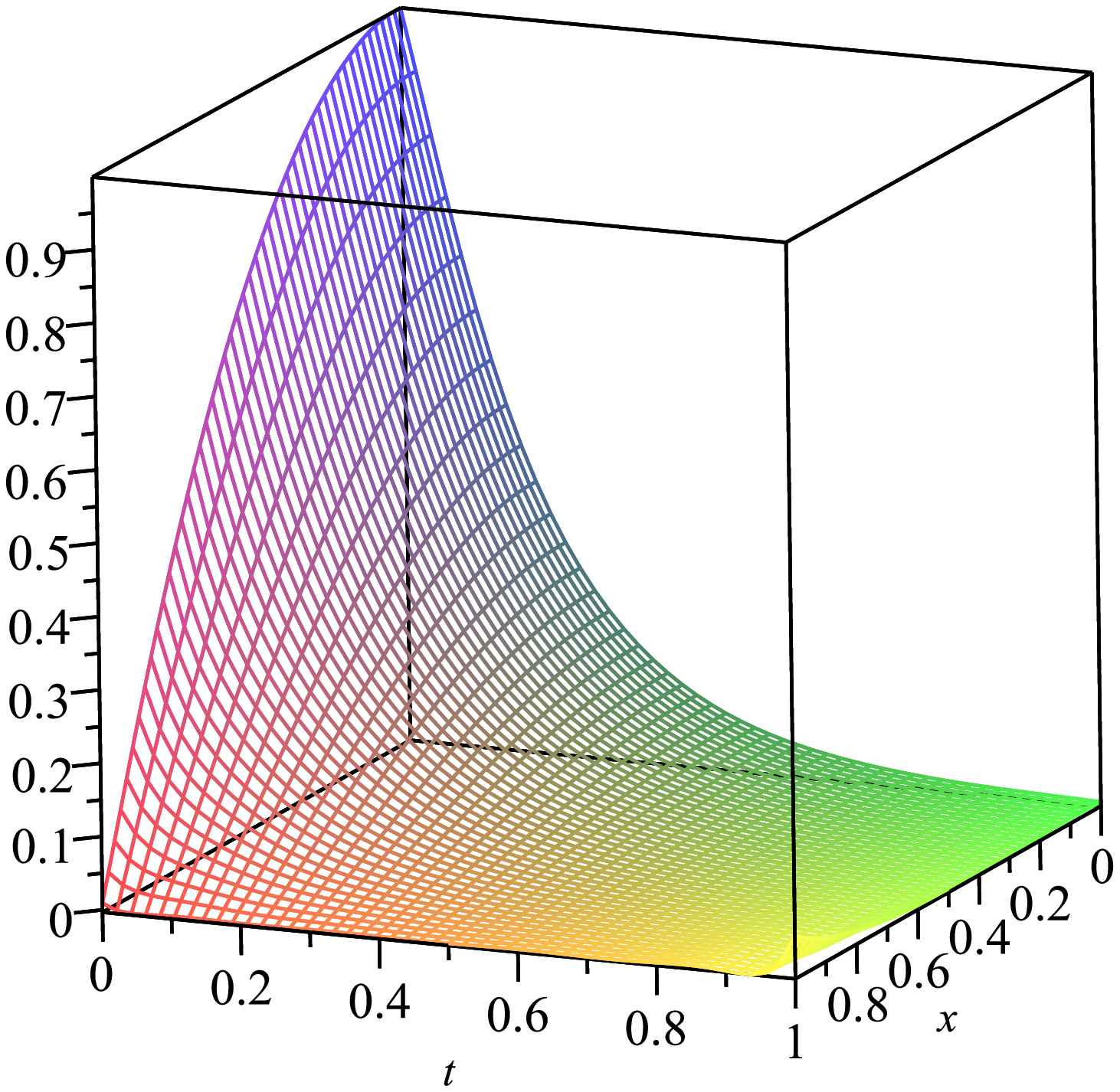}
        \caption{Approximation of state function $z(x, t)$ for Example  \ref{ex1}}
        \label{fig:b}
    \end{subfigure}
     \hfill
     \begin{subfigure}[b]{0.45\textwidth}
        \centering
        \includegraphics[width=\textwidth]{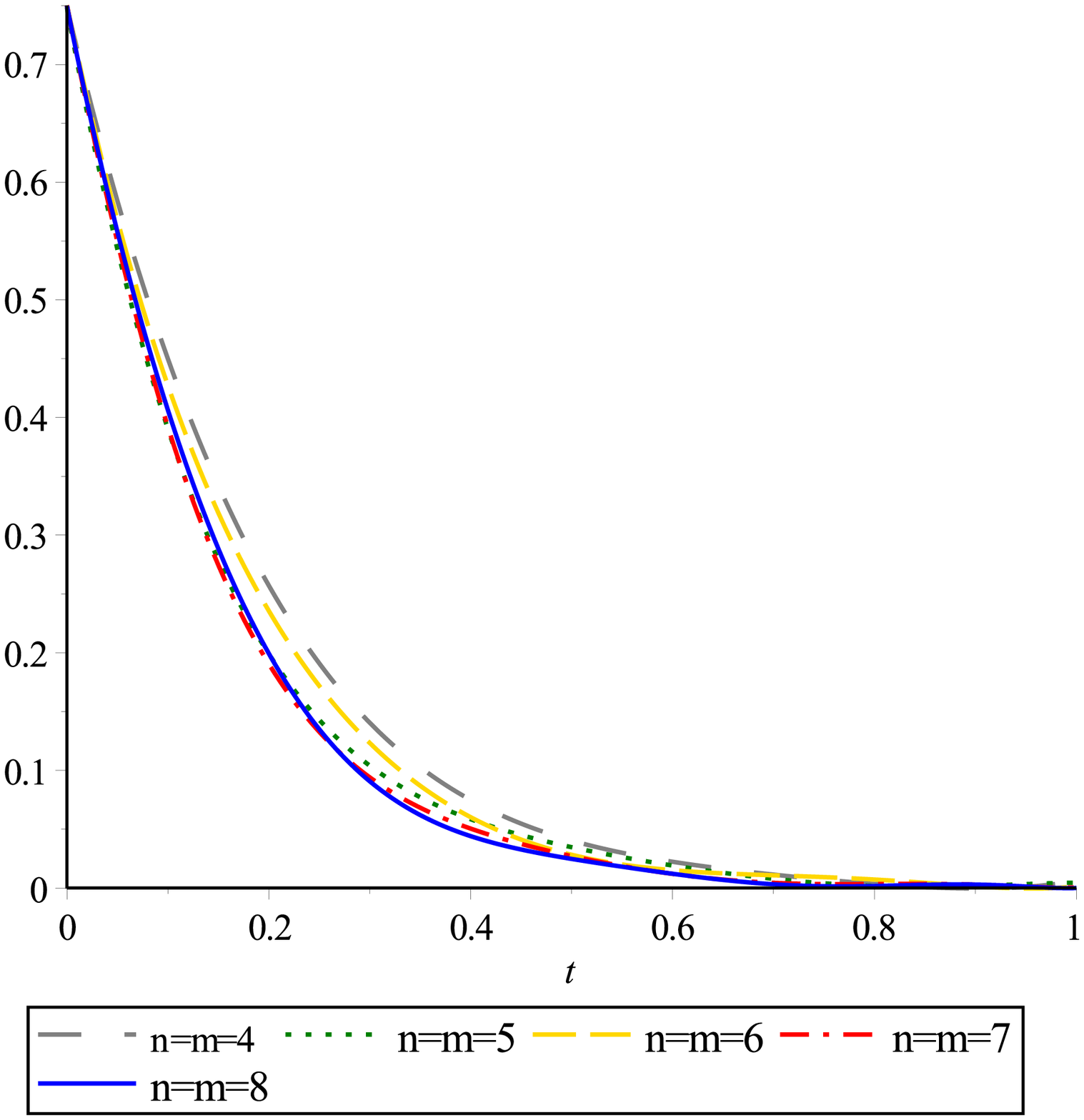}
        \caption{Approximation  of $z(x, t)$ for $x = 0.5$ with various choices of  $m, n$}
        \label{fig:c}
    \end{subfigure}
    \hfill
    \begin{subfigure}[b]{0.45\textwidth}
        \centering
        \includegraphics[width=\textwidth]{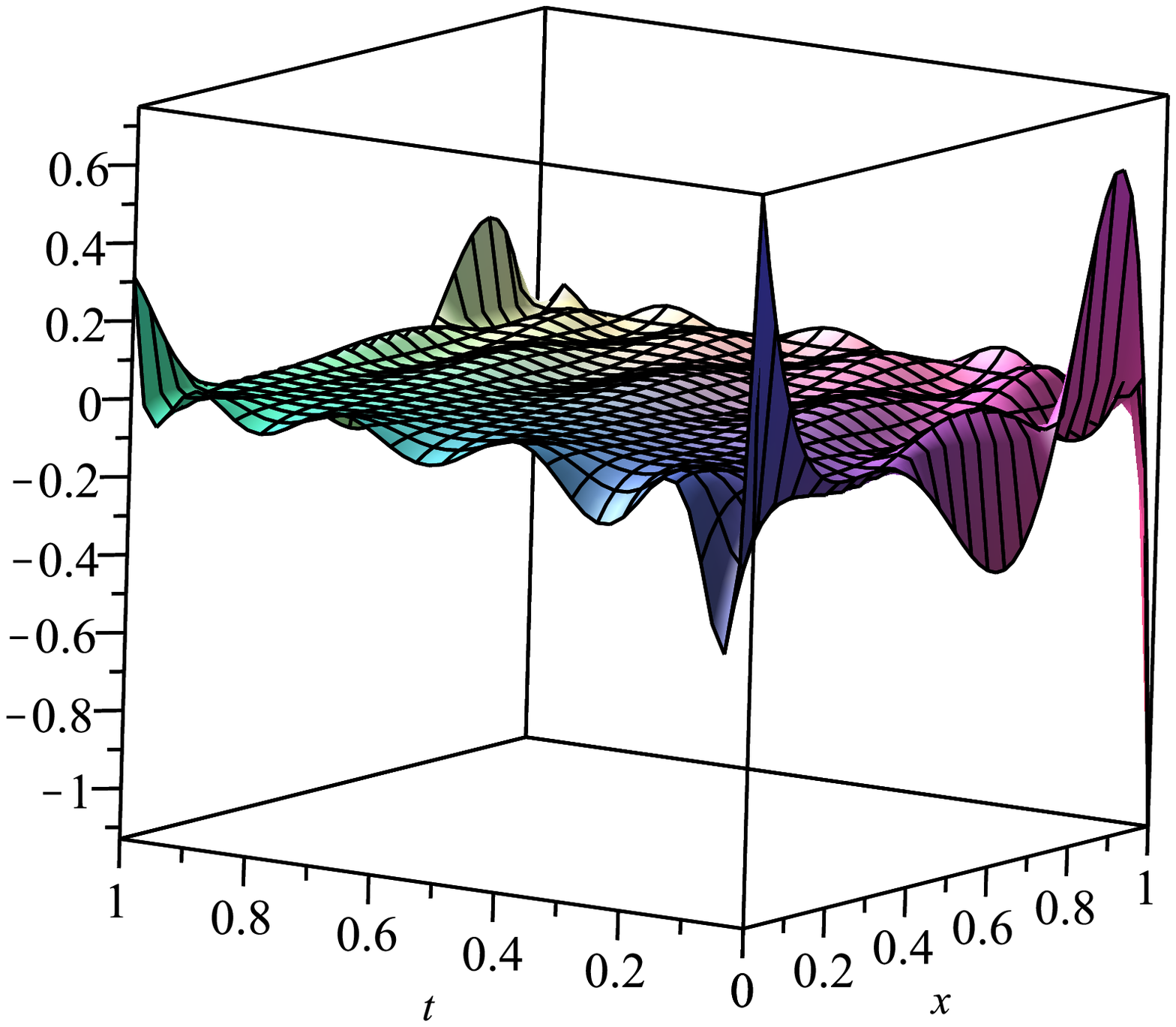}
        \caption{Approximation of control variable $y(x, t)$ for Example  \ref{ex1}}
        \label{fig:b}
    \end{subfigure}
     \hfill
     \begin{subfigure}[b]{0.45\textwidth}
        \centering
        \includegraphics[width=\textwidth]{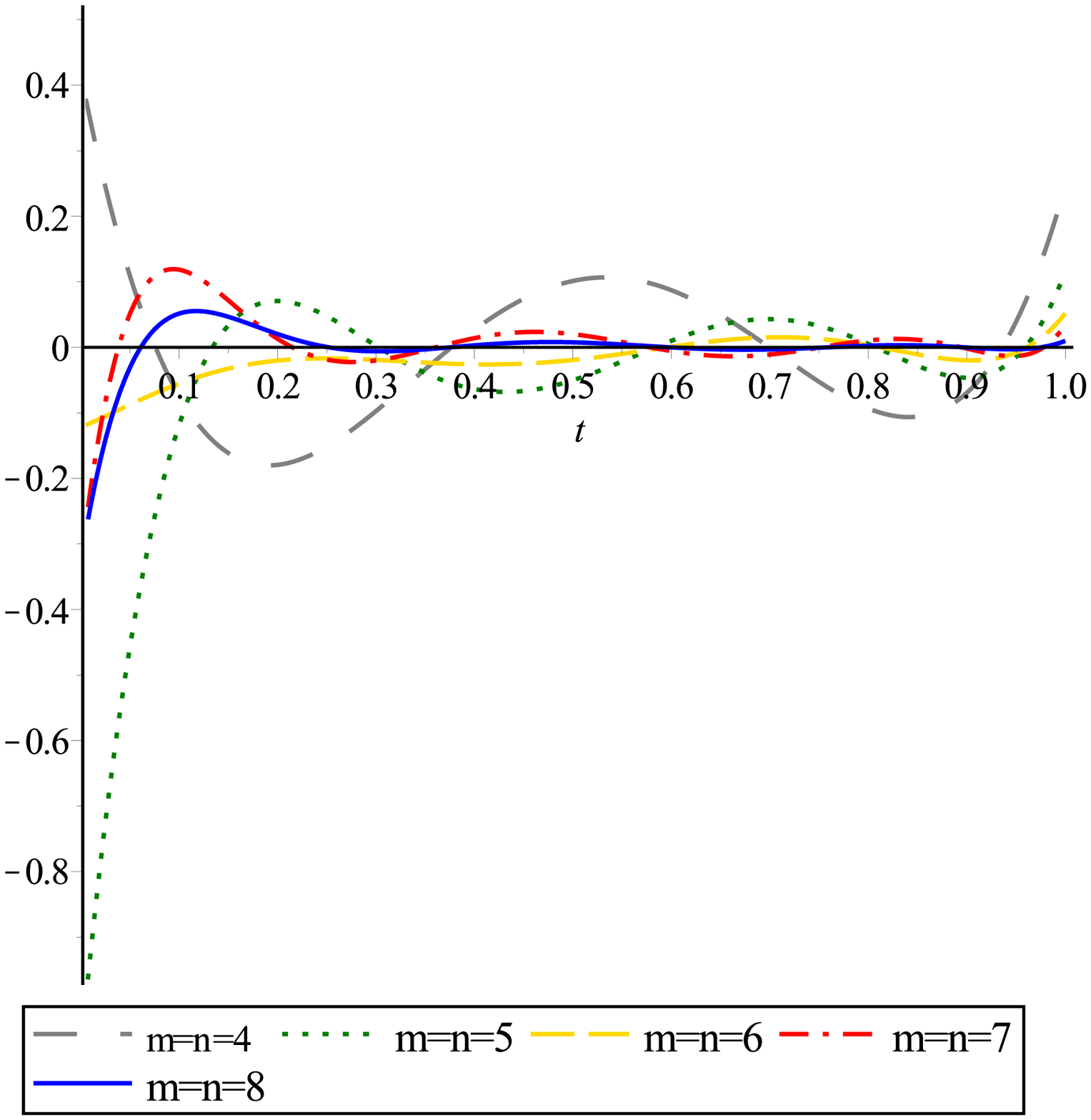}
        \caption{Approximation  of $y(x, t)$ for $x = 0.5$ with various choices of  $m, n$}
        \label{fig:c}
    \end{subfigure}
     \hfill
    \caption{Plots of approximate state and contorl functions for Example  \ref{ex1}.}\label{fig:ex1}
\end{figure}

\begin{table}[!ht]
\centering
\caption{Numerical result for $J$ in Example  \ref{ex1} with presented method and Ritz method \cite{meme} . (
$\alpha=0$,$\beta=2$)}
\label{table:ex1:comparison}
\scalebox{0.85}{
\begin{tabular}{llllllll}
\hline
n                & 1          & 2          & 2          & 2          & 3          & 3          & 3          \\ 
m                & 4          & 4          & 6          & 7          & 7          & 9          & 10         \\ \hline
Ritz method \cite{meme}      & 0.081044   & 0.028790   & 0.018283   & 0.016484   & 0.013027   & 0.010405   & 0.007569   \\ \hline
presented method & 0.01586417 & 0.01408943 & 0.01396815 & 0.01401548 & 0.00473073 & 0.00470629 & 0.00468442 \\ \hline
\end{tabular}}
\end{table}

\begin{example}\label{ex2}
\end{example}
In this example the $r=2$, $c_1=c_2=R=k=1$ and the initial condition is \cite{meme,mehdihasan} $$z(x,0)=z_0(x)=sin(2\pi x),~0<x<R.$$ 
Therefore, we have 
\begin{equation}
min ~~ J=\frac{1}{2}\int_0^1\int_0^1 x^2\big(z^2(x,t)+y^2(x,t)\big)dxdt,
\end{equation}
subject to 
\begin{equation}
x\frac{\partial z}{\partial t}=x(\frac{\partial^2 z}{\partial x^2}+\frac{2}{x}\frac{\partial z}{\partial t})+xy(x,t),
\end{equation}
with boundary and initial conditions
\begin{equation}
z(x,0)=z_0(x)=sin(2\pi x), 0<x<1,~~~~~~~z(1,t)=0,t>0.
\end{equation}

Like Example  \ref{ex1}, the method discussed in Section 2, 3, is utilized to approximate the solution of Example  \ref{ex2}. The cost functional $J$ for different selection of $n$, $m$, $\alpha$, $\beta$ is calculated in Table \ref{table:ex2:overall}.  Figure \ref{fig:ex2}(a,c) show numerical results for state and control functions i.e. $z(x,t)$ and $y(x,t)$, respectively. These results are plotted  in Fig. \ref{fig:ex2}(b,d) at $x=0.5$ in  a surface plot. Note that initially the state and control at two different locations differ, but as the time progresses, the two values become very close. As said in Example  \ref{ex1}. this is because of diffusion. A comparison with Ritz method \cite{meme} is made and reported in Table \ref{table:ex2:comparison}. The results in this table demonstrate that the presented method is  more accurate and reliable.
\begin{table}[!ht]
\centering
\caption{Numerical result of Example  \ref{ex2}.}
\label{table:ex2:overall}
\scalebox{0.85}{
\begin{tabular}{ccccccccccccccc}
\hline
$n$&$m$&$\alpha$&$\beta$&$J$&$n$&$m$&$\alpha$&$\beta$&$J$&$n$&$m$&$\alpha$&$\beta$&$J$\\
\hline
\multirow{10}{*}{2} & \multirow{10}{*}{2} & 0 & 0 & 0.981774266 & \multirow{10}{*}{5} & \multirow{10}{*}{5} & 0 & 0 & 2.36052795 & \multirow{10}{*}{5} & \multirow{10}{*}{7} & 0 & 0 & 0.7709361 \\ 
 &  & -0.5 & -0.5 & 0.598751916 &  &  & -0.5 & -0.5 & 2.56788894 &  &  & -0.5 & -0.5 & 0.8706653 \\ 
 &  & 0.5 & 0.5 & 1.226311717 &  &  & 0.5 & 0.5 & 2.15726157 &  &  & 0.5 & 0.5 & 0.8083347 \\ 
 &  & -0.5 & 0.5 & 0.135957972 &  &  & -0.5 & 0.5 & 2.26726831 &  &  & -0.5 & 0.5 & 0.6988001 \\ 
 &  & 0.5 & -0.5 & 2.079513828 &  &  & 0.5 & -0.5 & 2.38904227 &  &  & 0.5 & -0.5 & 0.8083347 \\ 
 &  & 0 & 1 & 0.383652016 &  &  & 0 & 1 & 2.04317188 &  &  & 0 & 1 & 0.5890023 \\ 
 &  & 1 & 0 & 2.022399132 &  &  & 1 & 0 & 2.22743042 &  &  & 1 & 0 & 0.7332970 \\ 
 &  & 0 & 2 & 0.101961113 &  &  & 0 & 2 & 1.68004117 &  &  & 0 & 2 & 0.4074085 \\ 
 &  & 2 & 0 & 2.012714897 &  &  & 2 & 0 & 2.08193333 &  &  & 2 & 0 & 0.6923438 \\ 
 &  & 3 & 1 & 1.877493818 &  &  & 3 & 1 & 1.77464161 &  &  & 3 & 1 & 0.5390153 \\ \hline
\multirow{10}{*}{7} & \multirow{10}{*}{5} & 0 & 0 & 2.498801296 & \multirow{10}{*}{7} & \multirow{10}{*}{10} & 0 & 0 & 0.16882398 & \multirow{10}{*}{10} & \multirow{10}{*}{10} & 0 & 0 & 0.1877156 \\
 &  & -0.5 & -0.5 & 2.650340836 &  &  & -0.5 & -0.5 & 0.20321134 &  &  & -0.5 & -0.5 & 0.2208658 \\ 
 &  & 0.5 & 0.5 & 2.305646950 &  &  & 0.5 & 0.5 & 0.13032446 &  &  & 0.5 & 0.5 & 0.1469488 \\ 
 &  & -0.5 & 0.5 & 2.415879813 &  &  & -0.5 & 0.5 & 0.14219532 &  &  & -0.5 & 0.5 & 0.1510822 \\
 &  & 0.5 & -0.5 & 2.499141221 &  &  & 0.5 & -0.5 & 0.18137586 &  &  & 0.5 & -0.5 & 0.2050793 \\
 &  & 0 & 1 & 2.201622390 &  &  & 0 & 1 & 0.10295491 &  &  & 0 & 1 & 0.1096786 \\ 
 &  & 1 & 0 & 2.349064681 &  &  & 1 & 0 & 0.15196838 &  &  & 1 & 0 & 0.1759308 \\ 
 &  & 0 & 2 & 1.837451911 &  &  & 0 & 2 & 0.05137739 &  &  & 0 & 2 & 0.0481434 \\ 
 &  & 2 & 0 & 2.162014225 &  &  & 2 & 0 & 0.13443163 &  &  & 2 & 0 & 0.1631296 \\ 
 &  & 3 & 1 & 1.831793809 &  &  & 3 & 1 & 0.08179826 &  &  & 3 & 1 & 0.1001137 \\ \hline
\end{tabular}}
\end{table}

\begin{figure}[!ht]
    
    \begin{subfigure}[b]{0.45\textwidth}
        \centering
        \includegraphics[width=\textwidth]{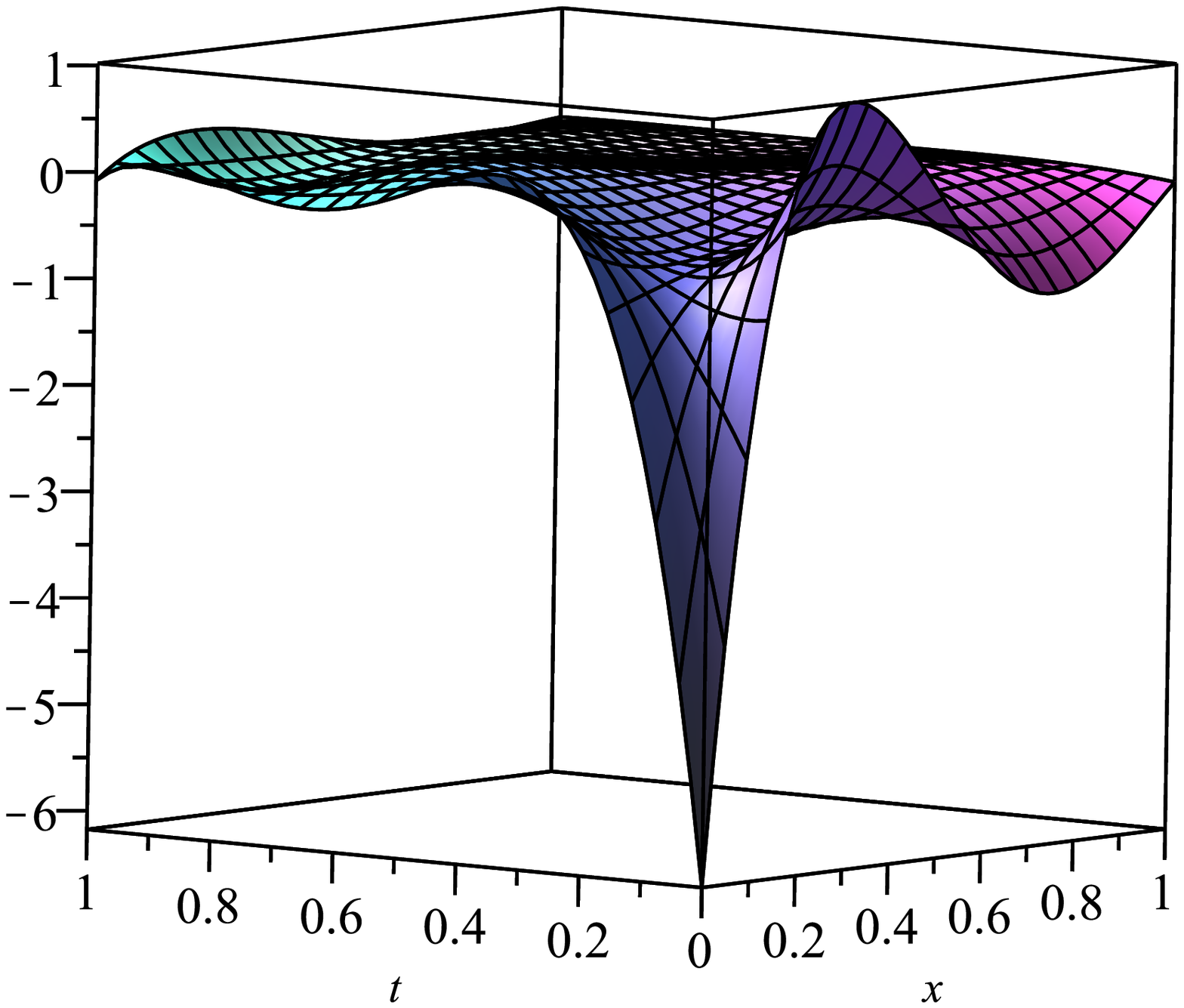}
        \caption{Approximation of state function $z(x, t)$ for Example  \ref{ex2}}
        \label{fig:b}
    \end{subfigure}
     \hfill
     \begin{subfigure}[b]{0.45\textwidth}
        \centering
        \includegraphics[width=\textwidth]{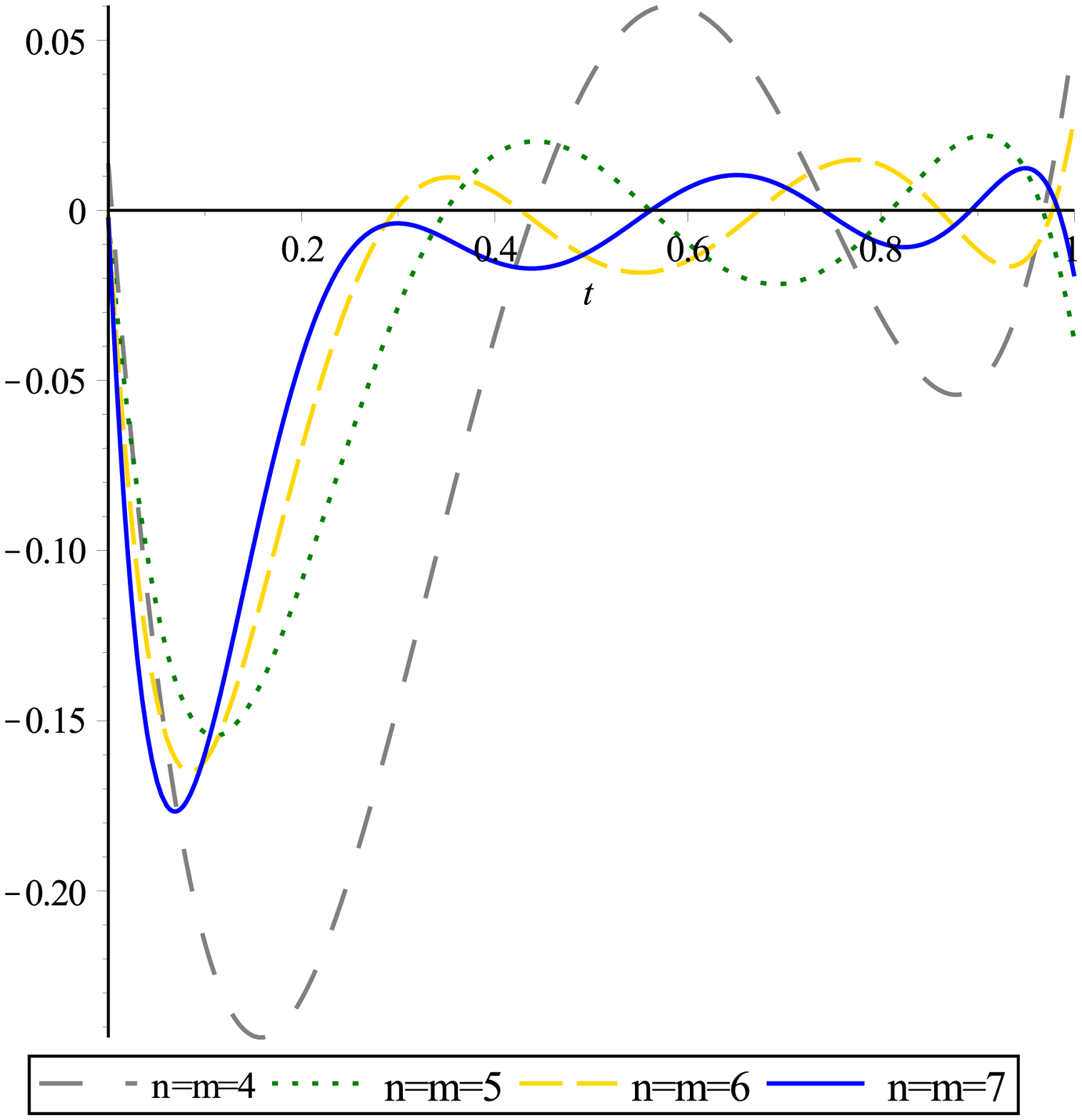}
        \caption{Approximation  of $z(x, t)$ for $x = 0.5$ with various choices of  $m, n$}
        \label{fig:c}
    \end{subfigure}
    \hfill
    \begin{subfigure}[b]{0.45\textwidth}
        \centering
        \includegraphics[width=\textwidth]{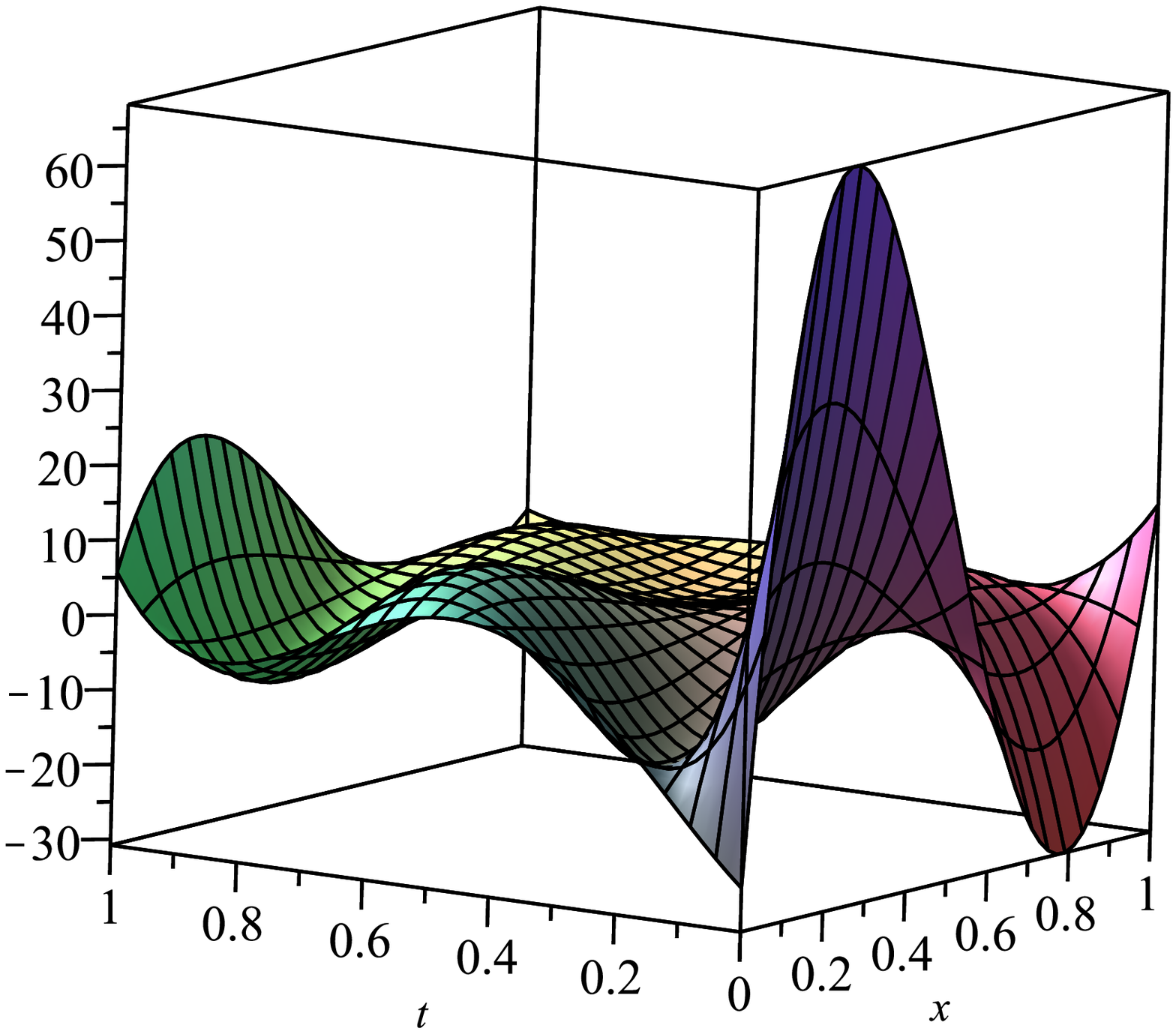}
        \caption{Approximation of control function $y(x, t)$ for Example  \ref{ex2}}
        \label{fig:b}
    \end{subfigure}
     \hfill
     \begin{subfigure}[b]{0.45\textwidth}
        \centering
        \includegraphics[width=\textwidth]{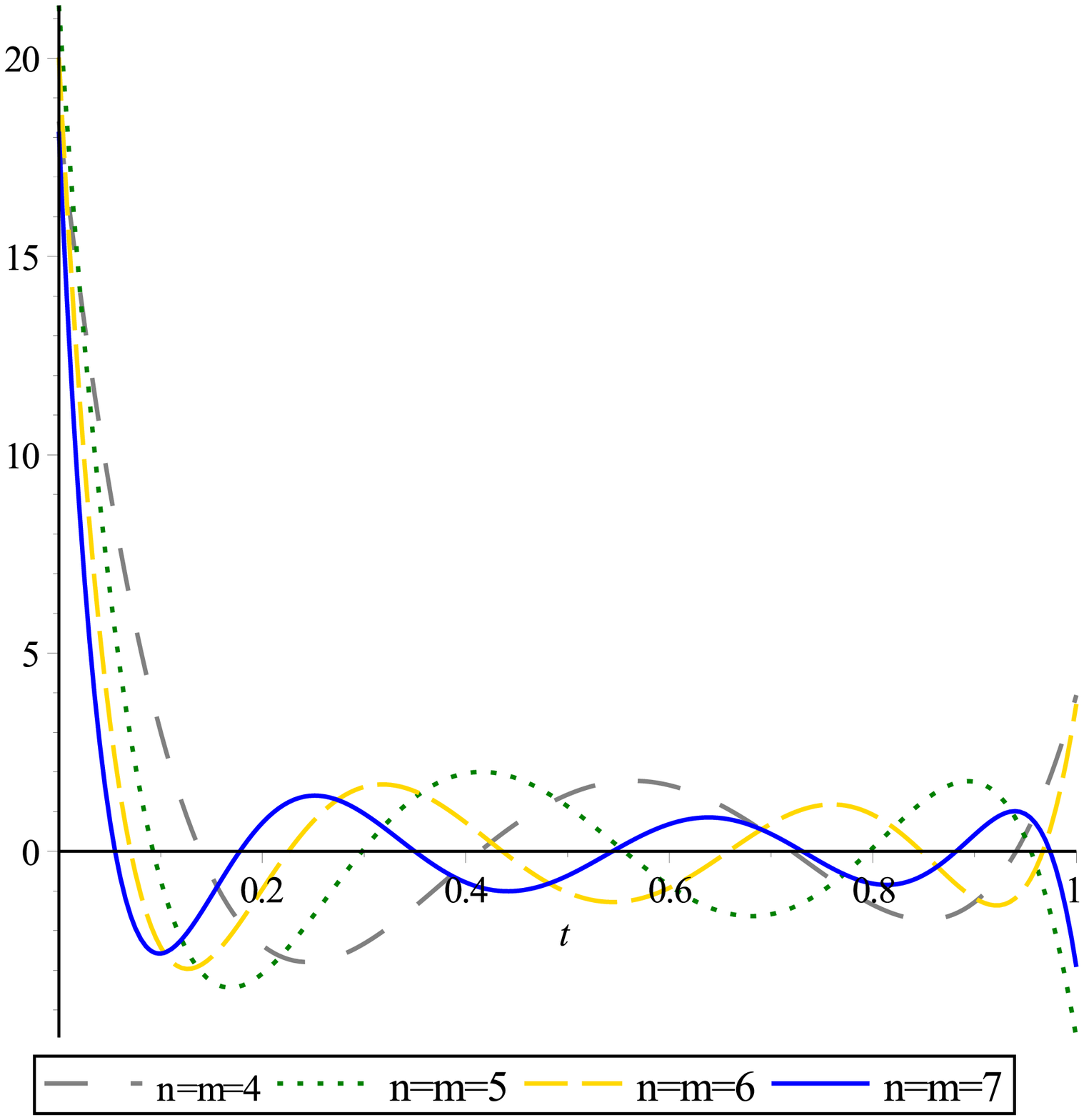}
        \caption{Approximation  of $y(x, t)$ for $x = 0.5$ with various choices of  $m, n$}
        \label{fig:c}
    \end{subfigure}
     \hfill
    \caption{Plots of approximate state and contorl functions for Example  \ref{ex2}.}\label{fig:ex2}
\end{figure}
\begin{table}[!ht]
\centering
\caption{Numerical result for $J$ in Example  \ref{ex2} with presented method and Ritz method \cite{meme} . (
$\alpha=0$,$\beta=2$)}
\label{table:ex2:comparison}
\scalebox{0.85}{
\begin{tabular}{llllllll}
\hline
n                & 4           & 5           & 5           & 6           & 6           & 7           & 7           \\
m                & 5           & 5           & 6           & 6           & 7           & 7           & 8           \\ \hline
Ritz method \cite{meme}           & 2.72722     & 1.92027     & 1.27424     & 0.91850     & 0.55287     & 0.54935     & 0.36868     \\ \hline
presented method & 1.834166023 & 1.680041170 & 0.834822493 & 0.865965461 & 0.426441006 & 0.467294422 & 0.229834210 \\ \hline
\end{tabular}}
\end{table}

\section{Conclusion}
In this study, a 2DOCP is investigated. This problem has beneficial applications in many chemical, biological,  and physical fields of studies. The goal of this  article is to develop an efficient and accurate method to solve this nonlinear 2DOCP. The method is based upon GLJGR collocation method.  Firstly,  the GL functions introduced so as to satisfy in delta Kronecker function and GLJGR collocation method is described. As expressed, these functions are a generalization of the classical Lagrangian polynomials. The corresponding differentiation matrices of $D^{(1)}$ and $D^{(2)}$, can be obtained by simple formulas. The main advantage of this proposed formulas is  that  these formulas are derivative-free. Additionally, The accuracy of the presented method by GL function has exponential convergence rate. Secondly, the obtained results compared with Mamehrashi et al. \cite{meme} results, showing the accuracy and reliability of the presented method. By this comparison, we emphasized that comparing with Ritz method developed by Mamehrashi et al. \cite{meme}, the more satisfactory results obtained only in the same number of polynomials order. This numerical approach is applicable and effective for such  kind of nonlinear 2DOCPs and other problems that can be approximated by Gauss-Radau nodes.


\end{document}